\documentclass[12pt,reqno]{article}
\usepackage[letterpaper,margin=1.6cm]{geometry}
\usepackage{latexsym, amsfonts, amsthm,amssymb,amscd,amsmath,makeidx}
\usepackage{enumerate}
\usepackage[normalem]{ulem}
\usepackage{exscale}
\usepackage{overpic} 
\usepackage{color} 
\usepackage{graphicx}
\usepackage{overpic}  
\usepackage{bm}
\usepackage{comment}
\usepackage{caption}
\usepackage{subcaption}
\usepackage{float}
\usepackage{mathrsfs}
\usepackage{amssymb}
\usepackage{mathtools}

\usepackage{tikz}
\usepackage{blindtext}
\usepackage{bbm}
\usepackage{fancyhdr} 
\usepackage{mathrsfs}
\usepackage{wrapfig}
\usepackage{hyperref}
\usepackage{cleveref}
\usepackage{cases}

\hypersetup{
    colorlinks,
    citecolor=black,
    filecolor=black,
    linkcolor=blue,
    urlcolor=black
}

\definecolor{DarkGreen}{rgb}{0.2,0.6,0.2}

\def\ua{\uparrow}
\def\da{\downarrow}

\def\ignore#1{}

\def\bR{{\mathbb R}}

\def\bN{\mathbb N}

\def\bP{{\mathbb P}}
\def\bE{{\mathbb E}}

\numberwithin{equation}{section}

\usetikzlibrary{positioning,calc,cd}

\allowdisplaybreaks[4]

\def\var{\mbox{var}}

\newtheorem{theorem}{Theorem}[section]
\newtheorem{proposition}[theorem]{Proposition}
\newtheorem{lemma}[theorem]{Lemma}
\newtheorem{corollary}[theorem]{Corollary}

\theoremstyle{definition}

\def\<{\langle}
\def\>{\rangle}

 \def\ua{\uparrow}
 \def\da{\downarrow}

\def\var{\text{\rm Var}}

\begin{document}

\title{A Gladyshev theorem for trifractional Brownian motion and $n$-th order fractional Brownian motion}
\author{ Xiyue Han\thanks{Department of Statistics and Actuarial Science, University of Waterloo. E-mail: {\tt xiyue.han@uwaterloo.ca}} }
        
\date{\normalsize May 04, 2021}

\vspace{-1cm}

\maketitle

\begin{abstract}We prove limit theorems for the weighted quadratic variation of trifractional Brownian motion and $n$-th order fractional Brownian motion. Furthermore, a sufficient condition for the $L^P$-convergence of the weighted quadratic variation for Gaussian processes is obtained as a byproduct. As an application, we give a statistical estimator for the self-similarity index of trifractional Brownian motion. These theorems extend results of Baxter, Gladyshev, and Norvai\v{s}a. \end{abstract}

\noindent{\it Key words:} quadratic variation; trifractional Brownian motion; $n$-th order fractional Brownian motion; self-similarity index

\medskip

\noindent{\it MSC 2020:} 60F15; 60G15; 60G17

\section{Introduction}
The study of limit theorems for the weighted quadratic variation of Gaussian processes was initiated by Baxter \cite{baxter1956strong}. Later, Gladyshev \cite{gladyshev1961new} established a similar result under more general assumptions, which applied to a wide class of Gaussian processes, for example, the fractional Brownian motion. This general result is often referred to as Gladyshev's theorem, which also provides a method for estimating the self-similarity index for the fractional Brownian motion. 

Consequently, Gladyshev's theorem has been extended in various directions by many authors, see Klein and Gin\'{e} \cite{klein1975quadratic}, Marcus and Rosen \cite{marcus1992p}, and K\^{o}no \cite{kono1969oscillation}. A recent preeminent complement to Gladyshev's theorem was proposed by Norvai\v{s}a \cite{norvaivsa2011complement}, who replaced hypotheses in \cite{gladyshev1961new} with weaker ones. The most recent study on this topic was due to Viitasaari \cite{viitasaari2019necessary} who gave a necessary and sufficient condition for the limit theorem for the quadratic variation, which greatly simplified the existing methodology. These results apply to a large number of Gaussian processes. For instance, both \cite{norvaivsa2011complement} and \cite{viitasaari2019necessary} directly yield the limit theorem for the weighted quadratic variation of the bifractional Brownian motion. However, existing techniques do not apply to the trifractional Brownian motion (tri-fBm) and the $n$-th order fractional Brownian motion ($n$-fBm); see the discussion in Section \ref{Section_Main}.

In a recent paper, Lei and Nualart \cite{lei2009decomposition} showed that a bifractional Brownian motion could be decomposed into the sum of a fractional Brownian motion and a Gaussian process with absolutely continuous trajectories. This process was subsequently rediscovered and studied by Ma \cite{ma2013schoenberg}, who also referred to it as the trifractional Brownian motion. According to Ma \cite{ma2013schoenberg}, a \emph{trifractional Brownian motion} $Z_{H,K} := \{Z_{H,K}(t),t\in [0,\infty)\}$ with parameters $H \in (0,1)$ and $K \in (0,1)$ is a centered Gaussian process with covariance function $C_{H,K}$,
\begin{equation*}
	C_{H,K}(s,t) = t^{2HK} + s^{2HK} - (t^{2H}+s^{2H})^{K},
\end{equation*}
for $t,s \in [0,\infty)$. A trifractional Brownian motion is an $HK$-self-similar process with non-stationary increments. 

The well-known fractional Brownian motion is governed by the Hurst parameter $H$ between $0$ and $1$. However, many observations reveal that the Hurst parameter could be larger than one in real life. Motivated by this fact, Perrin et al. \cite{perrin2001nth} introduced the $n$-th order fractional Brownian motion as an extension to the fractional Brownian motion. An \emph{$n$-th order fractional Brownian motion} $B_{H,n} := \{B_{H,n}(t),t \in [0,\infty)\}$ is a centered Gaussian process with the following covariance function, 
\begin{equation*}
	G_{H,n}(s,t) = (-1)^n\frac{C_{H}^n}{2}\Bigg(|t-s|^{2H} - \sum_{j = 0}^{n-1}(-1)^j{2H \choose j}\bigg(\big(\frac{t}{s}\big)^js^{2H} + \big(\frac{s}{t}\big)^jt^{2H}\bigg)\Bigg),
\end{equation*}
where $n \in \bN$, $H \in (n-1,n)$, and $C_H^{n} = \big(\Gamma(2H+1)|\sin(\pi H)|\big)^{-1}$. The class of $n$-th order fractional Brownian motions allows a wider range of the Hurst parameter $H$. In other words, the class of $n$-th order fractional Brownian motions extends the Hurst parameter beyond the constraint $H \in (0,1)$ and includes the case of fractional Brownian motion for $n=1$. Besides, an $n$-th order fractional Brownian motion is still $H$-self-similar and $n$-stationary.

For a real-valued process $X$ and $\alpha \in \bR$, we study the limit of 
\begin{equation}\label{eq_weight_sum}
	2^{\alpha n}\sum_{k = 1}^{2^n}\bigg(X\Big(\frac{k}{2^n}\Big) - X\Big(\frac{k-1}{2^n}\Big)\bigg)^2 
\end{equation}
as $n \ua \infty$, provided it exists in some sense. In particular, our study concerns the convergence results of \eqref{eq_weight_sum} for the trifractional Brownian motion and $n$-th order fractional Brownian motion for various $\alpha$. Our results also lead to a statistical estimator for the self-similarity index for the trifractional Brownian motion.
\section{Statement of main results}\label{Section_Main}
For a real-valued stochastic process $X= \{X(t), t \in [0,\infty)\}$, if there exists $\gamma \in \bR$ such that 
\begin{equation}\label{sharp_eq}
	\lim\limits_{n \ua \infty } 2^{\alpha n} \sum_{k = 1}^{2^n}\bigg(X\Big(\frac{k}{2^n}\Big) - X\Big(\frac{k-1}{2^n}\Big)\bigg)^2 = \begin{cases} 0 &\quad\text{for} \quad\alpha < \gamma,  \\ \infty &\quad\text{for} \quad\alpha > \gamma,\end{cases} 
\end{equation}
almost surely, we then say $\gamma$ is the \emph{critical exponent} of the weighted quadratic variation of the process $X$. This notion of the critical exponent measures the roughness of the process $X$; the rougher a process is, the smaller its critical exponent will be. At the critical case $\alpha = \gamma$, the left-hand side of \eqref{sharp_eq} may be infinite, finite or nonexistent. We refer to \cite{HanSchiedZhang1, HanSchiedZhang2} for a discussion of what can happen for deterministic fractal functions.

For Gaussian processes, their critical exponents can be computed by Gladyshev's theorem and its succeeding extensions. For instance, in the very recent complement \cite{norvaivsa2011complement}, Norvai\v{s}a considers Gaussian processes $X$ with structure functions $\psi_X$ of the following form,
\begin{equation*}
	\psi_X(s,t):= \bE\big((X(t)-X(s))^2\big) = d|t-s|^{2-\lambda} + b(s,t) \qquad \text{for} \qquad t,s \in [0,\infty),
\end{equation*}
where $d > 0$, $\lambda \in (0,2)$, and $b(\cdot,\cdot)$ is a symmetric function such that for all $\epsilon > 0$
\begin{equation*}
	\lim _{h \downarrow 0} \frac{\sup \{|b(t-h, t)|: t \in(\epsilon, 1]\}}{h^{2-\lambda}}=0.
\end{equation*}
Under these conditions, Norvai\v{s}a shows that the Gaussian process $X$ admits the critical exponent $\gamma = 1 -\lambda$, and the weighted quadratic variation converges to $d$ almost surely at the critical case $\alpha = \gamma$. For instance, the results in \cite{norvaivsa2011complement} directly apply to the bifractional Brownian motion, which has a critical exponent $2HK-1$.

However, as stated in previous paragraphs, these existing methods fail to calculate the critical exponent for the trifractional Brownian motion and the $n$-th order fractional Brownian motion. For instance, the trifractional Brownian motion $Z_{H,K}$ has the following structure function,
\begin{equation*}
	\psi_{Z_{H,K}}(s,t) = 2(s^{2H}+t^{2H})^K - 2^{K}s^{2HK} -2^Kt^{2HK},
\end{equation*}
for $t,s \in [0,\infty)$. It is clear that $d = 0$ in this case. Moreover, for the $n$-th fractional Brownian motion $B_{H,n}$, one has \begin{equation}\label{n_fbm_eq_1}
	\psi_{B_{H,n}}(s,t) = (-1)^nC_H^n\Bigg(\sum_{j = 0}^{n-1}(-1)^j{2H \choose j}\bigg(t^js^{2H-j} + s^jt^{2H-j} -t^{2H} - s^{2H}\bigg) - |t-s|^{2H}\Bigg),
\end{equation}
for $t,s \in [0,\infty)$. The constant $d = (-1)^{n+1}C^n_H$ could possibly take negative values. 

The rest of this paper is organized as follows. We establish \eqref{sharp_eq} and compute the critical exponent for the trifractional Brownian motion and the $n$-th order fractional Brownian motion in Theorem \ref{Thm_tri_new} and Theorem \ref{nfbm_thm}, respectively. Proposition \ref{Lemma_Lp} studies the $L^P$-convergence of the weighted quadratic variation, which is needed in the proofs of subsequent theorems. We then prove the limiting theorems at the critical cases for each process in Theorem \ref{thm_tri_ip} and Theorem \ref{Thm_nfbm_ip}. Finally, Corollary \ref{cor_estimate} constructs a consistent estimator for the self-similarity index of the trifractional Brownian motion. Proofs are given in Section \ref{Section_Proofs}. 

Now we state our main results, let us start with the trifractional Brownian motion.

\begin{theorem}\label{Thm_tri_new}
	For $H \in (0,1)$, $K \in (0,1)$ and $T \in (0,\infty)$, the trifractional Brownian motion $Z_{H,K}$ has the following limiting behavior, 
	\begin{enumerate}
		\item \label{Tri_part_a} If $HK < \frac{1}{2}$,
		\begin{numcases}{ \lim_{n \ua \infty}2^{\alpha n} \sum_{k = 1}^{2^n}\bigg(Z_{H,K}\Big(\frac{kT}{2^n}\Big) - Z_{H,K}\Big(\frac{(k-1)T}{2^n}\Big)\bigg)^2 = }
		0 \quad \text{ for}& $\alpha < 2HK,$ \label{tri_case_eq_1_1}\\
		\infty \quad \text{for}& $\alpha > 2HK,$ \label{tri_case_eq_1_2}
		\end{numcases}
		almost surely. In particular, the critical exponent $\gamma = 2HK$.
		\item \label{Tri_part_b} If $HK \ge \frac{1}{2}$,
		\begin{numcases}{\lim\limits_{n \ua \infty } 2^{\alpha n}\sum_{k = 1}^{2^n}\bigg(Z_{H,K}\Big(\frac{kT}{2^n}\Big) - Z_{H,K}\Big(\frac{(k-1)T}{2^n}\Big)\bigg)^2 = }
			0 \quad \text{ for} & $\alpha < 1,$ \label{tri_case_eq_2_1}\\
			\infty \quad \text{for} &$\alpha > 1,$ \label{tri_case_eq_2_2}
		\end{numcases}
		almost surely. In particular, the critical exponent $\gamma = 1$.
	\end{enumerate}
\end{theorem}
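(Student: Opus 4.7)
My plan is to combine a Taylor expansion of the structure function $\psi_{Z_{H,K}}(s,t) := \bE[(Z_{H,K}(t) - Z_{H,K}(s))^2] = 2(s^{2H}+t^{2H})^K - 2^K s^{2HK} - 2^K t^{2HK}$ with the $L^p$-convergence criterion of Proposition \ref{Lemma_Lp} and a Borel--Cantelli argument. First, I would expand $h \mapsto \psi_{Z_{H,K}}(s, s+h)$ about $h = 0$ for fixed $s > 0$: a direct computation shows that the first derivative vanishes and the second derivative equals $2 H^2 K(1-K)\, 2^K s^{2HK-2}$, hence $\psi_{Z_{H,K}}(s, s+h) = H^2 K(1-K)\, 2^K s^{2HK-2}\, h^2 + O(h^3)$. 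By contrast, $\psi_{Z_{H,K}}(0, h) = (2-2^K)\, h^{2HK}$, so $Z_{H,K}$ behaves like a fractional Brownian motion of index $HK$ only at the origin and like a $C^1$ function on $(0, T]$, consistent with the absolutely continuous trajectories noted in Lei--Nualart and Ma. Combined with the $HK$-self-similarity $\psi_{Z_{H,K}}(\lambda s, \lambda t) = \lambda^{2HK}\psi_{Z_{H,K}}(s,t)$, this yields a uniform bound of the form $\psi_{Z_{H,K}}(s, s+h) \leq C \min\{h^{2HK},\, s^{2HK-2}\, h^2\}$.

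Reducing to $T=1$ by self-similarity, the mean of $S_n(\alpha) := 2^{\alpha n} \sum_{k=1}^{2^n} (Z_{H,K}(k/2^n) - Z_{H,K}((k-1)/2^n))^2$ satisfies $\bE[S_n(\alpha)] = 2^{(\alpha - 2HK)n} \sum_{k=1}^{2^n} \psi_{Z_{H,K}}(k-1, k)$. Substituting the large-$k$ asymptotic $\psi_{Z_{H,K}}(k-1, k) \sim H^2 K(1-K)\, 2^K (k-1)^{2HK-2}$ and comparing with $\int_1^{2^n} x^{2HK-2}\,dx$, the partial sum stays bounded when $HK < 1/2$ and grows like $(2^n)^{2HK-1}$ when $HK > 1/2$ (with an extra logarithmic factor at $HK = 1/2$). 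This pins down $\bE[S_n(\alpha)] \to 0$ for $\alpha < \gamma$ and $\bE[S_n(\alpha)] \to \infty$ for $\alpha > \gamma$, with $\gamma = 2HK$ in case (a) and $\gamma = 1$ in case (b). For the fluctuations, $S_n(\alpha) - \bE[S_n(\alpha)]$ lies in the second Wiener chaos, so Gaussian hypercontractivity yields $\|S_n - \bE S_n\|_p \leq c_p \sqrt{\var S_n}$ for every $p \geq 2$; the variance $\var S_n = 2 \cdot 2^{2\alpha n} \sum_{i,j} \cov(X_i^{(n)}, X_j^{(n)})^2$, with $X_k^{(n)} := Z_{H,K}(k/2^n) - Z_{H,K}((k-1)/2^n)$, expands into iterated second differences of $\psi_{Z_{H,K}}$, which the uniform bound above controls and yields $\sqrt{\var S_n}$ of at most the same order as $\bE S_n$ with a $2^{-cn}$ slack. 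Proposition \ref{Lemma_Lp} packages precisely this computation.

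A Chebyshev--Borel-Cantelli argument with a sufficiently large even $p$ then gives $\sum_n \bP(|S_n - \bE S_n| > \delta\, \bE S_n) < \infty$ for every $\delta > 0$, upgrading convergence-in-mean to the claimed almost sure limits: $S_n(\alpha) \to 0$ when $\alpha < \gamma$, and $S_n(\alpha) \to \infty$ when $\alpha > \gamma$. The main obstacle I foresee is obtaining the correct variance bound in the transition regime $HK \approx 1/2$, where the boundary contribution (from $k = 1$, scaling like a fractional Brownian motion of index $HK$) and the bulk contribution (from $k \geq 2$, scaling like a $C^1$ function) are of the same order, and covariances $\cov(X_i^{(n)}, X_j^{(n)})$ involving index $1$ scale differently from the interior ones. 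Splitting the double sum into near-diagonal and far-diagonal blocks and treating the boundary separately is where Proposition \ref{Lemma_Lp} does most of the technical work.
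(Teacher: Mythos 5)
Your argument for the convergence-to-zero statements \eqref{tri_case_eq_1_1} and \eqref{tri_case_eq_2_1} is essentially the paper's: bound $\bE(S_n^\alpha)$ and $\var(S_n^\alpha)$ via the second-order behaviour of the covariance (the paper bounds $\partial^2 C_{H,K}/\partial s\partial t$ rather than Taylor-expanding $\psi$, but this is the same estimate), check that both decay geometrically for $\alpha<\gamma$, and apply Borel--Cantelli. That part is sound.

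The divergence statements \eqref{tri_case_eq_1_2} and \eqref{tri_case_eq_2_2} are where your proposal has a genuine gap. You propose to deduce $S_n^\alpha\to\infty$ from $\bE S_n^\alpha\to\infty$ together with a relative concentration bound $\sum_n\bP(|S_n-\bE S_n|>\delta\,\bE S_n)<\infty$, claiming $\sqrt{\var S_n}$ is smaller than $\bE S_n$ by a factor $2^{-cn}$. That slack does not exist. In case (a) the boundary increment alone gives $2^{\alpha n}Z_{H,K}(2^{-n})^2=(2-2^K)2^{(\alpha-2HK)n}Y_n^2$ with $Y_n$ standard normal, so $\var(S_n^\alpha)\gtrsim (\bE S_n^\alpha)^2$ up to constants; hypercontractivity only converts $L^2$ bounds into $L^p$ bounds of the same order, so no choice of $p$ makes the series summable. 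The same obstruction appears in case (b): the paper's Theorem \ref{thm_tri_ip} shows that at $\alpha=1$ the normalized sum converges to a \emph{non-constant} random variable (and for $HK<1/2$ at $\alpha=2HK$ it does not converge in probability at all), which is incompatible with any concentration of $S_n$ around its mean at relative scale $o(1)$. The paper therefore uses two different arguments for divergence: for \eqref{tri_case_eq_1_2} it lower-bounds $S_n^\alpha$ by the single term $2^{\alpha n}Z_{H,K}(2^{-n})^2$ and shows $\bP(S_n^\alpha\le M)$ is summable by an explicit Gaussian small-ball estimate (via the lower incomplete gamma function); for \eqref{tri_case_eq_2_2} it works pathwise, using the Lei--Nualart representation $Z_{H,K}(t)=\sqrt{K/\Gamma(1-K)}\,X_K(t^{2H})$ to get smooth trajectories on $(0,\infty)$ and the Riemann-sum limit $2^n\sum_{k\ge\lfloor 2^n\epsilon\rfloor}(\Delta Z_{H,K})^2\to\int_\epsilon^1(Z_{H,K}'(t))^2\,dt>0$ a.s., which forces $S_n^\alpha\to\infty$ for $\alpha>1$. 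You would need to replace your concentration step by arguments of this kind. (A minor point: Proposition \ref{Lemma_Lp} is not used in the proof of this theorem; it serves the critical-case results, and it establishes $L^p$ convergence to a non-constant limit rather than the variance control you attribute to it.)
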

Now we state the result for the $n$-th order fractional Brownian motion. 
\begin{theorem}\label{nfbm_thm}
	For $m \ge 2$, $H \in (m-1,m)$ and $T \in (0, \infty)$, the $m$-th order fractional Brownian motion $B_{H.m}$ has the following limiting behavior, 
	\begin{equation}\label{nthm_thm_eq_1}
		\lim_{n \ua \infty}2^{\alpha n} \sum_{k = 1}^{2^n}\bigg(B_{H.m}\Big(\frac{kT}{2^n}\Big) - B_{H.m}\Big(\frac{(k-1)T}{2^n}\Big)\bigg)^2 = 0 \quad \text{for} \quad \alpha < 1,
	\end{equation}
	almost surely. Moreover, if $m \ge 3$, we have 
	\begin{equation}\label{nthm_thm_eq_2}
		\lim_{n \ua \infty}2^{\alpha n} \sum_{k = 1}^{2^n}\bigg(B_{H.m}\Big(\frac{kT}{2^n}\Big) - B_{H.m}\Big(\frac{(k-1)T}{2^n}\Big)\bigg)^2 = \infty \quad \text{for} \quad \alpha > 1,
	\end{equation}
	almost surely. In particular, the critical exponent $\gamma = 1$ for $m \ge 3$. \end{theorem}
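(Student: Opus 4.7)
The plan is to compute the leading behavior of the structure function $\psi_{B_{H,m}}(s,t)$ near the diagonal, derive the limit of the expected sum, and then promote $L^p$-control (via Proposition \ref{Lemma_Lp}) to almost-sure convergence by a Borel--Cantelli argument. Let $S_n(\alpha) := 2^{\alpha n}\sum_{k=1}^{2^n}\bigl(B_{H,m}(kT/2^n) - B_{H,m}((k-1)T/2^n)\bigr)^2$, so that $S_n(\alpha) = 2^{(\alpha-1)n} S_n(1)$. Rewriting \eqref{n_fbm_eq_1} as $\psi_{B_{H,m}}(s,t) = (-1)^m C_H^m P_m(s,t) + (-1)^{m+1}C_H^m |t-s|^{2H}$, where $P_m$ is the symmetric polynomial sum, the key observation is that $H > m-1 \ge 1$ gives $2H > 2$, so the fractional term $|t-s|^{2H}$ is subleading compared with the quadratic contribution from $P_m$.

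Expanding $P_m(s,s+h)$ by elementary differentiation of $t^j s^{2H-j}$ at $t=s$ and simplifying the resulting alternating sum via the identity
\[
  \sum_{j=0}^{m-1}(-1)^j\,j(2H-j)\binom{2H}{j} = (-1)^{m-1}\,2H(2H-1)\binom{2H-3}{m-2},
\]
I obtain the uniform expansion
\[
  \psi_{B_{H,m}}(s,s+h) = c_m(s)\,h^2 + o(h^2), \qquad h \da 0,\ s \in [0,T],
\]
where $c_m(s) = 2\,C_H^m\,H(2H-1)\binom{2H-3}{m-2}\,s^{2H-2}$ is strictly positive for every $m \ge 2$. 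Summing this expansion over the dyadic grid yields $\bE[S_n(1)] \lra T\int_0^T c_m(s)\,ds =: L_m > 0$, so that $\bE[S_n(\alpha)] = 2^{(\alpha-1)n}\bE[S_n(1)] \lra 0$ for $\alpha < 1$ and $\lra \infty$ for $\alpha > 1$.

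For part \eqref{nthm_thm_eq_1}, Proposition \ref{Lemma_Lp} provides an $L^p$-estimate for $S_n(\alpha)$; combined with Markov's inequality and Borel--Cantelli along the dyadic sequence $n \in \bN$, this promotes $\bE[S_n(\alpha)] \to 0$ to almost sure convergence. For part \eqref{nthm_thm_eq_2}, one uses $S_n(\alpha) = 2^{(\alpha-1)n} S_n(1)$ to reduce the claim to showing $\liminf_n S_n(1) > 0$ almost surely, which in turn follows from another application of Proposition \ref{Lemma_Lp} paired with the positive limit $\bE[S_n(1)] \to L_m$. The principal obstacle is extracting from Proposition \ref{Lemma_Lp} a sufficiently strong quantitative $L^p$-rate to make the dyadic tail probabilities summable; I suspect that this quantitative step is precisely where the hypothesis $m \ge 3$ enters in the divergence part, since both the structural expansion of $\psi_{B_{H,m}}$ and the positivity of $c_m$ already hold at $m = 2$.
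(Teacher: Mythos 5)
Your treatment of the convergence part \eqref{nthm_thm_eq_1} is essentially the paper's argument: the paper expands $\psi_{B_{H,m}}(t,t-1)\sim C_{H-1}^{m-1}\binom{2H-3}{m-2}t^{2H-2}$ as $t\ua\infty$ and transfers this to the dyadic grid by $H$-self-similarity, which is equivalent to your local expansion $\psi_{B_{H,m}}(s,s+h)=c_m(s)h^2+o(h^2)$; either way one gets $\bE[S_n(1)]=\mathcal{O}(1)$, hence $\bE[S_n(\alpha)]=\mathcal{O}(2^{(\alpha-1)n})$, and since $S_n(\alpha)\ge 0$ this is already summable, so Markov plus Borel--Cantelli closes the case $\alpha<1$ (the paper uses the Cauchy--Schwarz variance bound $\var(S_n^\alpha)\le 2^{2\alpha n+1}(\sum_k\phi^{(n)}_{k,k})^2=\mathcal{O}(2^{2(\alpha-1)n})$ for the same purpose; you do not actually need the full strength of Proposition \ref{Lemma_Lp} here, only the boundedness of $\bE[S_n(1)]$).

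The divergence part \eqref{nthm_thm_eq_2} is where your proposal has a genuine gap. You reduce to showing $\liminf_n S_n(1)>0$ almost surely and claim this follows from Proposition \ref{Lemma_Lp} together with $\bE[S_n(1)]\to L_m>0$. Neither ingredient delivers the conclusion: $L^p$-convergence of $S_n(1)$ to a limit $S_\infty$ gives almost sure convergence only along a subsequence, and, more importantly, $\bE[S_\infty]>0$ does not rule out $\bP(S_\infty=0)>0$, so no almost sure positive lower bound on $\liminf_n S_n(1)$ follows. Your guess about where $m\ge 3$ enters is also off target: it has nothing to do with quantitative $L^p$-rates. The paper's mechanism is pathwise: by \cite[Remark 2.3]{sottinen2019transfer}, $B_{H,m}$ has $(m-1)$-times differentiable sample paths, so for $m\ge 3$ the paths are $C^2$ and the second-order Taylor argument from the proof of \eqref{tri_case_eq_2_2} applies verbatim, giving $2^n\sum_{k\ge\lfloor 2^n\varepsilon\rfloor}(\Delta B_{H,m})^2\to\int_\varepsilon^1\bigl(B_{H,m}'(t)\bigr)^2dt$ for every sample path, a limit which is strictly positive almost surely because the paths are non-constant; multiplying by $2^{(\alpha-1)n}\ua\infty$ then yields \eqref{nthm_thm_eq_2}. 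For $m=2$ the paths are only once differentiable, which is exactly why the theorem restricts the divergence statement to $m\ge 3$. To repair your proof you would need to replace the moment argument by this pathwise smoothness argument (or otherwise establish an almost sure positive lower bound on $S_n(1)$), and you should expect the restriction $m\ge 3$ to surface at that point rather than in any $L^p$ estimate.
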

The following proposition is needed to study limit theorems at the critical cases, i.e., $\alpha = 1$ in part \ref{Tri_part_b} of Theorem \ref{Thm_tri_new} and $\alpha = 1$ in Theorem \ref{nfbm_thm}. In Proposition \ref{Lemma_Lp}, we provide a sufficient condition for the $L^P$-convergence of \eqref{eq_weight_sum} for $\alpha = 1$ as $n \ua \infty$.
\begin{proposition}\label{Lemma_Lp}
	Let $X$ be a centered Gaussian process. If 
	\begin{equation}\label{lemma_gen_bound}
		\sup_{n\in \bN}2^{n}\sum_{k = 1}^{2^n}\bE\left[\bigg(X\Big(\frac{k}{2^n}\Big)-X\Big(\frac{k-1}{2^n}\Big)\bigg)^2\right] < \infty,
	\end{equation}
	the sum 
	\begin{equation*}
		2^{n}\sum_{k = 1}^{2^n}\left(X\Big(\frac{k}{2^n}\Big)-X\Big(\frac{k-1}{2^n}\Big)\right)^2
	\end{equation*}
	converges to some non-constant random variable in $L^p$ as $n \ua \infty$ for all $p \in (1,\infty)$.
\end{proposition}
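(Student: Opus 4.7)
The plan is to exploit dyadic refinement to show the sequence is almost surely monotone, after which hypercontractivity on the second Wiener chaos will deliver $L^p$-convergence for every $p \in (1,\infty)$.

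Set $\xi_{n,k} := X(k/2^n) - X((k-1)/2^n)$ and $V_n := 2^n \sum_{k=1}^{2^n} \xi_{n,k}^2$, so that assumption \eqref{lemma_gen_bound} is exactly $\sup_n \bE V_n < \infty$. The crucial algebraic observation uses the nesting identity $\xi_{n,j} = \xi_{n+1,2j-1} + \xi_{n+1,2j}$: expanding $\xi_{n,j}^2$ and comparing with $V_{n+1} = 2^{n+1}\sum_k \xi_{n+1,k}^2$, a one-line computation yields
\[
V_{n+1} - V_n \;=\; 2^n \sum_{j=1}^{2^n}\bigl(\xi_{n+1,2j-1}-\xi_{n+1,2j}\bigr)^2 \;\ge\; 0.
\]
Thus $(V_n)_n$ is almost surely nondecreasing, and since its means are uniformly bounded, monotone convergence produces a finite random variable $V_\infty$ with $V_n \uparrow V_\infty$ almost surely and in $L^1$.

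To upgrade to $L^p$, I would first bound $V_n$ in $L^2$. Since $V_n - \bE V_n$ lies in the second Wiener chaos generated by $X$, the identity $\cov(\xi_j^2,\xi_k^2) = 2\cov(\xi_j,\xi_k)^2$ for jointly centered Gaussians combined with the Cauchy--Schwarz bound $\cov(\xi_{n,j},\xi_{n,k})^2 \le \var(\xi_{n,j})\var(\xi_{n,k})$ gives
\[
\var(V_n) \;=\; 2\cdot 2^{2n}\sum_{j,k}\cov(\xi_{n,j},\xi_{n,k})^2 \;\le\; 2\cdot 2^{2n}\Bigl(\sum_k \var(\xi_{n,k})\Bigr)^2 \;=\; 2(\bE V_n)^2.
\]
Hence $V_n$ is bounded in $L^2$, and Nelson's hypercontractivity on a fixed Wiener chaos upgrades this to $\sup_n \|V_n\|_{L^q} < \infty$ for every $q < \infty$. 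Taking $q > p$ yields uniform integrability of $\{|V_n - V_\infty|^p\}$, and combining with the almost sure convergence gives $V_n \to V_\infty$ in $L^p$ for every $p \in (1,\infty)$.

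For non-constancy, monotonicity gives $V_\infty \ge V_1 = 2\bigl(X(1/2)^2 + (X(1)-X(1/2))^2\bigr)$; if $V_\infty$ were a deterministic constant $c$, this would force $V_1 \le c$ almost surely, which is impossible for a non-trivial nonnegative quadratic form in centered Gaussians (its support is unbounded above unless it is already constant). The main obstacle I anticipate is identifying the monotonicity identity itself: without it one would be forced into a delicate Cauchy-in-$L^2$ analysis tailored to the structure function, which is exactly the difficulty that causes the Gladyshev-type theorems in \cite{norvaivsa2011complement} and \cite{viitasaari2019necessary} to fail for the processes of interest here; once monotonicity is in hand, the remainder is a routine combination of the Gaussian quadratic-form variance identity with hypercontractivity on the second Wiener chaos.
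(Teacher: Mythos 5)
Your proof is correct, but it takes a genuinely different route from the paper. The paper invokes Viitasaari's criterion \cite[Theorem 2.1]{viitasaari2019necessary}, reducing the claim to the existence of the double limit of $a_{m,n}=2^{n+m}\sum_{j,k}(\phi^{(m,n)}_{j,k})^2$, which it establishes by showing $a_{m,n}$ is nondecreasing in each index (via the refinement identity $\phi^{(m,n)}_{j,k}=\phi^{(m,n+1)}_{j,2k-1}+\phi^{(m,n+1)}_{j,2k}$ together with $(x+y)^2\le 2x^2+2y^2$) and uniformly bounded by Cauchy--Schwarz and \eqref{lemma_gen_bound}. You instead prove the pathwise identity $V_{n+1}-V_n=2^n\sum_j(\xi_{n+1,2j-1}-\xi_{n+1,2j})^2\ge 0$, which is the almost-sure analogue of the paper's second-moment monotonicity; combined with \eqref{lemma_gen_bound} and monotone convergence this gives an a.s.\ finite limit, and the Wick identity $\var(V_n)\le 2(\bE V_n)^2$ plus hypercontractivity on the second Wiener chaos upgrades the convergence to $L^p$ for all $p<\infty$. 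Your argument is self-contained and arguably more transparent about \emph{why} the limit exists, at the cost of importing hypercontractivity; the paper's route is shorter given the citation and hands you the limit's variance for free. Two minor points: your formula for $V_1$ tacitly assumes $X(0)=0$ (the increments should read $X(1/2)-X(0)$, etc.), and your non-constancy argument requires that some dyadic increment of $X$ be non-degenerate --- as literally stated the proposition fails for the zero process --- but the paper makes exactly the same implicit assumption when it asserts $a_{0,0}>0$, so this is a shared, harmless gap rather than a defect of your proof. All the individual steps (the telescoping identity, the variance bound, uniform integrability from equivalence of chaos norms) check out.
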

For the convenience of exposition in subsequent proofs, we introduce the following notation: For a centered Gaussian process $X$, we denote 
\begin{equation}\label{eq_phi_def}
	\phi^{(m,n)}_{j,k}:= \bE\left[\Big(X\Big(\frac{j}{2^m}\Big)-X\Big(\frac{j-1}{2^m}\Big)\Big)\Big(X\Big(\frac{k}{2^n}\Big)-X\Big(\frac{k-1}{2^n}\Big)\Big)\right],
\end{equation}
for $m,n \in \bN$, $1 \le j \le 2^m$ and $1 \le k \le 2^n$. In particular, when the arguments $m = n$, we denote $\phi^{(n)}_{j,k}:= \phi^{(n,n)}_{j,k}$ in short. In Section \ref{Section_Proofs}, we assign the Gaussian process $X$ to be either the trifractional Brownian motion or the $n$-th order fractional Brownian motion. 
\begin{proof}[Proof of Proposition \ref{Lemma_Lp}]
	According to \cite[Theorem 2.1]{viitasaari2019necessary}, it suffices to show that the double limit $$\lim_{n,m \ua \infty}2^{n+m}\sum_{k = 1}^{2^n}\sum_{j = 1}^{2^m}(\phi^{(m,n)}_{j,k})^2$$
	exists. For simplicity, let $a_{m,n}:= 2^{n+m}\sum_{k = 1}^{2^n}\sum_{j = 1}^{2^m}(\phi^{(m,n)}_{j,k})^2$. The Cauchy-Schwarz inequality leads to 
	\begin{equation*}
		(\phi_{j,k}^{(m,n)})^2 \le \bE\left[\Big(X\Big(\frac{k}{2^n}\Big)-X\Big(\frac{k-1}{2^n}\Big)\Big)^2\right]\bE\left[\Big(X\Big(\frac{j}{2^m}\Big)-X\Big(\frac{j-1}{2^m}\Big)\Big)^2\right]= \phi^{(m)}_{j,j}\phi^{(n)}_{k,k}.
	\end{equation*}
	Therefore, 
	\begin{equation*}
		a_{m,n} \le 2^{n+m}\sum_{k = 1}^{2^n}\sum_{j = 1}^{2^m}\phi^{(n)}_{k,k}\phi^{(m)}_{j,j} = \Big(2^n\sum_{k = 1}^{2^n}\phi^{(n)}_{k,k}\Big)\Big(2^m\sum_{j = 1}^{2^m}\phi^{(m)}_{j,j}\Big).
	\end{equation*}
	By the condition \eqref{lemma_gen_bound}, the double sequence $(a_{m,n})_{m,n \in \bN}$ is uniformly bounded. Thus, there exists $M > 0$ such that $\sup_{m,n} a_{m,n} \le M$. Next, for any $m,n \in \bN$, we have
	\begin{equation*}
		(\phi_{j,k}^{(m,n)})^2 = \Big(\phi^{(m,n+1)}_{j,2k}+\phi^{(m,n+1)}_{j,2k-1}\Big)^2 \le  2\Big((\phi^{(m,n+1)}_{j,2k})^2+(\phi^{(m,n+1)}_{j,2k-1})^2\Big).
	\end{equation*}
	Therefore, one has 
	\begin{equation*}
		a_{m,n} = 2^{n+m}\sum_{k = 1}^{2^n}\sum_{j = 1}^{2^m}(\phi^{(m,n)}_{j,k})^2 \le 2^{n+m+1}\sum_{k = 1}^{2^n}\sum_{j = 1}^{2^m}\Big((\phi^{(m,n+1)}_{j,2k})^2+(\phi^{(m,n+1)}_{j,2k-1})^2\Big) = a_{m,n+1}.
	\end{equation*}
	By an analogous argument, one also gets $a_{m,n} \le a_{m+1,n}$. Therefore, for each fixed $m \in \bN$, there exists a positive non-decreasing sequence $(b_m)_{m \in \bN}$ such that $\lim_{n \ua \infty}a_{m,n} = \lim_{n \ua \infty}a_{n,m} = b_m$. As $b_m \le M$ for all $m \in \bN$, there also exists a positive constant $b$ such that $0 < a_{0,0} \le  b \le M$ and $b_m \ua b$ as $m \ua \infty$. 
	
	Next, we will show the double limit $\lim_{m,n \ua \infty}a_{m,n}$ exists and is equal to $b$. For any $\epsilon > 0$, there exists $m_1(\epsilon)$ such that for all $m \ge m_1(\epsilon)$, we have $0 \le b-b_m \le \epsilon/2$. Moreover, let us select $n_1(\epsilon)$ such that for all $n \ge n_1(\epsilon)$, $0 \le a_{m_1(\epsilon),n} - b_{m_1(\epsilon)} \le \epsilon/2$. Taking $N(\epsilon) := \max[n_1(\epsilon),m_1(\epsilon)]$, one has for $m,n \ge N(\epsilon)$,
	\begin{equation*}
		b-a_{m,n} \le b - a_{m_1(\epsilon),N(\epsilon)} \le (b-b_{m_1(\epsilon)}) + (b_{m_1(\epsilon)}-a_{m_1(\epsilon),N(\epsilon)}) \le \epsilon.
	\end{equation*}
	The first inequality holds as the double sequence $a_{m,n}$ is increasing in each argument. Therefore, the double limit $\lim_{m,n \ua \infty}a_{m,n}$ exists and $\lim_{m,n \ua \infty}a_{m,n} = b > 0$. Thus, by virtue of \cite[Theorem 2.1]{viitasaari2019necessary}, the weighted sum $2^{n}\sum_{k = 1}^{2^n}\big(X\big(\frac{k}{2^n}\big)-X\big(\frac{k-1}{2^n}\big)\big)^2$ converges to some non-constant random variable in $L^p$ for all $p \in (1,\infty)$. This completes the proof.
\end{proof}
Now, we state the convergence results at the critical cases for the trifractional Brownian motion and the $n$-th order fractional Brownian motion.
\begin{theorem}\label{thm_tri_ip}
	For $H \in (0,1)$, $K \in (0,1)$ and $T \in (0,\infty)$, the trifractional Brownian motion $Z_{H,K}$ has the following limiting behavior, 
	\begin{enumerate}
		\item \label{Thm_Tri_ip_a}If $HK < 1/2$, \begin{equation*}
			2^{2HKn}\sum_{k = 1}^{2^n}\left(Z_{H,K}\Big(\frac{kT}{2^n}\Big)-Z_{H,K}\Big(\frac{(k-1)T}{2^n}\Big)\right)^2
		\end{equation*}
		does not converge in probability as $n \ua \infty$.
		\item \label{Thm_Tri_ip_b}If $HK > 1/2$, 
		\begin{equation*}
			2^{n}\sum_{k = 1}^{2^n}\left(Z_{H,K}\Big(\frac{kT}{2^n}\Big)-Z_{H,K}\Big(\frac{(k-1)T}{2^n}\Big)\right)^2
		\end{equation*}
		converges to some non-constant random variable in $L^p$ for all $p \in (1,\infty)$ as $n \ua \infty$.
	\end{enumerate}
\end{theorem}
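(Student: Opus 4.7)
Part (b), when $HK > 1/2$, is a direct application of Proposition \ref{Lemma_Lp}. After the time rescaling $\tilde Z(t) := Z_{H,K}(tT)$ to match the dyadic grid $\{k/2^n\}$ of the proposition, one only needs to verify the uniform bound \eqref{lemma_gen_bound}. The key identity $\psi_{Z_{H,K}}(s,t) = -[(2s^{2H})^K + (2t^{2H})^K - 2(s^{2H}+t^{2H})^K]$ together with the midpoint Taylor expansion $f(a)+f(b)-2f((a+b)/2) = \tfrac{(b-a)^2}{4}f''(\xi)$ applied to $f(x) = x^K$ yields
\[
\psi_{Z_{H,K}}(s,t) = K(1-K)\,\xi^{K-2}(t^{2H}-s^{2H})^2
\]
for some $\xi \in (2\min(s,t)^{2H}, 2\max(s,t)^{2H})$. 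Bounding $\xi^{K-2} \le (2s^{2H})^{K-2}$ and $t^{2H}-s^{2H} \le 2H s^{2H-1}(t-s)$ gives $\psi_{Z_{H,K}}((k-1)T/2^n, kT/2^n) \le C s^{2HK-2}(T/2^n)^2$ for $k \ge 2$, while $\psi_{Z_{H,K}}(0, T/2^n) = (2-2^K)(T/2^n)^{2HK}$. Summing and comparing with $\int_0^T s^{2HK-2}\,ds$, which is finite when $HK > 1/2$, one verifies uniform boundedness, and Proposition \ref{Lemma_Lp} delivers the $L^p$-convergence to a non-constant limit.

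Part (a), when $HK < 1/2$, requires a different argument because the weight $2^{2HKn}$ lies outside the scope of Proposition \ref{Lemma_Lp}. My plan is to use that $S_n := 2^{2HKn}\sum_{k=1}^{2^n}(Z_{H,K}(kT/2^n) - Z_{H,K}((k-1)T/2^n))^2$ is a quadratic functional of a centered Gaussian family, hence lies in the second Wiener chaos; by Nelson's hypercontractivity, convergence in probability on a fixed chaos is equivalent to convergence in $L^2$. It therefore suffices to show that $(S_n)$ is not Cauchy in $L^2$.

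To establish non-Cauchyness, I would first isolate the boundary term $\xi_n := 2^{2HKn}(Z_{H,K}(T/2^n))^2$. The $HK$-self-similarity of $Z_{H,K}$ gives $\xi_n \stackrel{d}{=} T^{2HK}(Z_{H,K}(1))^2$ for every $n$, so $\bV[\xi_n]$ is a positive constant. By Isserlis' formula, $\cov(\xi_n,\xi_m) = 2\bigl(2^{HK(n+m)}C_{H,K}(T/2^n,T/2^m)\bigr)^2$, and a direct Taylor expansion of $C_{H,K}$ shows this rescaled covariance tends to $0$ as $|n-m|\to\infty$ (the leading orders decay like $2^{-HK|n-m|}$ and $2^{-(2H-HK)|n-m|}$). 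Hence $\bE[(\xi_n-\xi_m)^2] \to 2\bV[\xi_n] > 0$, and $(\xi_n)$ is not $L^2$-Cauchy. The main obstacle is to upgrade this to the full sum $S_n = \xi_n + (S_n - \xi_n)$; the cleanest approach I see is the joint self-similarity identity
\[
(S_n,S_m) \stackrel{d}{=} \Bigl(T^{2HK}\!\sum_{j=1}^{2^n}(Z_{H,K}(j)-Z_{H,K}(j-1))^2,\; 2^{-2HK(n-m)} T^{2HK}\!\sum_{k=1}^{2^m}(Z_{H,K}(k 2^{n-m})-Z_{H,K}((k-1)2^{n-m}))^2\Bigr),
\]
which reduces $\cov(S_n,S_m)$ to a covariance between fine- and coarse-scale increment sums. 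Comparing against the finite limit $\lim_n \bV[S_n] = 2T^{4HK}\sum_{j,k=1}^\infty \bE[(Z_{H,K}(j)-Z_{H,K}(j-1))(Z_{H,K}(k)-Z_{H,K}(k-1))]^2$ (finite by Cauchy-Schwarz since $\sum_k \psi_{Z_{H,K}}(k-1,k) < \infty$ when $HK < 1/2$), the hardest step is showing that $\cov(S_n,S_m)$ does not attain this limit as $|n-m|\to\infty$, which reflects the asymptotic decorrelation between the small- and large-scale increments of $Z_{H,K}$.
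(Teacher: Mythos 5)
Your part (b) is correct and follows the same route as the paper: verify the uniform bound \eqref{lemma_gen_bound} and invoke Proposition \ref{Lemma_Lp}. Your midpoint-Taylor computation of $\psi_{Z_{H,K}}$ is equivalent to the paper's bound on $\partial^2 C_{H,K}/\partial s\partial t$ in Lemma \ref{lemma_upper_Bound_1} (minor quibble: the inequality $t^{2H}-s^{2H}\le 2Hs^{2H-1}(t-s)$ only holds as stated for $2H\le 1$; for $2H>1$ you need the mean value point at $t$ and then $t\le 2s$ for $k\ge 2$, costing only a constant).

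Part (a), however, has a genuine gap, and it sits exactly where you say the "hardest step" is. Your reduction is sound: on the second Wiener chaos, non-convergence in probability is equivalent to $(S_n)$ not being $L^2$-Cauchy, which by Isserlis comes down to showing that the double limit of $a_{m,n}:=2^{2HK(m+n)}\sum_{j,k}(\phi^{(m,n)}_{j,k})^2$ (essentially $\tfrac12\cov(S_m,S_n)$) fails to exist. This is precisely the criterion of Viitasaari that the paper cites. But you then only establish the non-Cauchyness of the single boundary term $\xi_n=2^{2HKn}Z_{H,K}(T/2^n)^2$, and you explicitly leave open the passage to the full sum, namely showing that $\cov(S_n,S_m)$ does not approach $\lim_n\var(S_n)$ as the scales separate. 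That estimate is the entire content of the paper's argument: using the pointwise bounds \eqref{eq_upper_Bound_2d1}, \eqref{eq_upper_Bound_2d2}, \eqref{eq_upper_Bound_1d>} and \eqref{eq_upper_Bound_1d2} of Lemma \ref{lemma_upper_Bound_1}, the paper shows the much stronger statement that for each fixed $n$, $a_{m,n}\to 0$ as $m\ua\infty$ (full decorrelation of fine- and coarse-scale increment sums, with explicit rates such as $\mathcal{O}(2^{(2HK-1)m})$), while self-similarity makes $a_{n,n}$ non-decreasing with $\lim_n a_{n,n}\ge a_{0,0}=2-2^K>0$; these two facts are incompatible with the existence of the double limit. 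Without an actual bound of this type — and it requires the two-scale covariance estimates, not just the variance asymptotics you have — your argument does not close: a priori the off-diagonal cross terms in $\cov(S_n,S_m)$ could keep the covariance near the limiting variance even though $\cov(\xi_n,\xi_m)\to 0$. So the strategy is right and matches the paper's, but the decisive quantitative step is missing.
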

\begin{theorem}\label{Thm_nfbm_ip}
	For $m \ge 2$, $H \in (m-1,m)$ and $T \in (0,\infty)$, the $m$-th order fractional Brownian motion $B_{H.m}$ has the following limiting behavior, 
	\begin{equation*}
		2^{n} \sum_{k = 1}^{2^n}\bigg(B_{H.m}\Big(\frac{kT}{2^n}\Big) - B_{H.m}\Big(\frac{(k-1)T}{2^n}\Big)\bigg)^2
	\end{equation*}
	converges to some non-constant random variable in $L^p$ for all $p \in (1,\infty)$ as $n \ua \infty$.
\end{theorem}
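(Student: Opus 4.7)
The plan is to apply Proposition \ref{Lemma_Lp} to the centered Gaussian process $X(t) := B_{H,m}(Tt)$ on $[0,1]$. It then suffices to verify the hypothesis \eqref{lemma_gen_bound} for $X$, namely
\begin{equation*}
\sup_{n \in \bN} 2^n \sum_{k=1}^{2^n} \bE\Big[\big(B_{H,m}(Tk/2^n) - B_{H,m}(T(k-1)/2^n)\big)^2\Big] < \infty.
\end{equation*}
Using $H$-self-similarity of $B_{H,m}$ with scale $\lambda = T/2^n$, each summand rewrites as $T^{2H} 2^{-2Hn} \psi_{B_{H,m}}(k-1,\,k)$, so the bound reduces to controlling $\sum_{k=1}^{2^n} \psi_{B_{H,m}}(k-1, k) = O\big(2^{n(2H-1)}\big)$. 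Since $H > m-1 \ge 1$ forces $2H - 2 > 0$, a standard integral comparison gives $\sum_{k=1}^{N} k^{2H-2} = O(N^{2H-1})$; hence the desired estimate will follow from the pointwise bound $\psi_{B_{H,m}}(k-1, k) = O(k^{2H-2})$ as $k \to \infty$.

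The central step is therefore to analyze the large-$s$ asymptotic of $s \mapsto \psi_{B_{H,m}}(s, s+1)$. Starting from the explicit formula \eqref{n_fbm_eq_1}, I would write $(s+1)^\alpha = s^\alpha (1 + 1/s)^\alpha$ for each relevant exponent $\alpha \in \{j, 2H-j, 2H\}$ and expand in $1/s$. A direct calculation shows that the zeroth-order terms in $1/s$ cancel, since the bracket $(s+1)^j s^{2H-j} + s^j (s+1)^{2H-j} - (s+1)^{2H} - s^{2H}$ has leading $s^{2H}$ coefficient $1 + 1 - 1 - 1 = 0$; the $s^{2H-1}$ coefficients cancel as well by the identity $j + (2H-j) - 2H = 0$. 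The surviving leading contribution is of order $s^{2H-2}$ with coefficient $\binom{j}{2} + \binom{2H-j}{2} - \binom{2H}{2} = j(j - 2H)$, so that
\begin{equation*}
\psi_{B_{H,m}}(s, s+1) = (-1)^m C_H^m \Big[ s^{2H-2} \sum_{j=1}^{m-1}(-1)^j \binom{2H}{j} j(j-2H) + O(s^{2H-3}) - 1\Big],
\end{equation*}
and since $2H-2 > 0$, the $s^{2H-2}$ term dominates both $O(s^{2H-3})$ and the constant $-1$ coming from $|t-s|^{2H} = 1$. Combined with continuity of $s \mapsto \psi_{B_{H,m}}(s, s+1)$ at the origin, this yields $\psi_{B_{H,m}}(k-1, k) \le C k^{2H-2}$ uniformly in $k \ge 1$, which closes the argument via Proposition \ref{Lemma_Lp}.

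The main obstacle will be the careful bookkeeping of the Taylor expansion, particularly because $2H - j$ is generally non-integer, so $(1 + 1/s)^{2H-j}$ must be expanded with a uniformly controlled remainder on $[1, \infty)$. This can be handled by applying Taylor's theorem with the integral form of the remainder to order two, which guarantees the error term is uniformly $O(s^{2H-3})$ for $s \ge 1$ and hence negligible relative to $s^{2H-2}$. The rest of the calculation is purely algebraic, so once the cancellations are tracked rigorously the estimate follows and the $L^p$ convergence is immediate from Proposition \ref{Lemma_Lp}.
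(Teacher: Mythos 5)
Your proposal is correct and follows essentially the same route as the paper: the paper also deduces the theorem from Proposition \ref{Lemma_Lp} together with the estimate \eqref{fbm_eq_1}, which it obtains (in the proof of Theorem \ref{nfbm_thm}) by the same Taylor expansion of $\psi_{B_{H,m}}(t,t-1)$ in $1/t$, yielding the leading coefficient $j(j-2H)$ at order $t^{2H-2}$ and hence $\sum_k \phi^{(n)}_{k,k}=\mathcal{O}(2^{-n})$ via $H$-self-similarity.
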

As observed in Theorem \ref{thm_tri_ip} and Theorem \ref{Thm_nfbm_ip}, the weighted quadratic variation of the trifractional Brownian motion and the $n$-th order fractional Brownian motion behaves essentially different to the Gaussian processes that Gladyshev's theorem apply to. To be more specific, if a Gaussian process fulfills conditions in Gladyshev \cite{gladyshev1961new} or Norvai\v{s}a \cite{norvaivsa2011complement}, its weighted quadratic sum \eqref{eq_weight_sum} converges to some positive constant at the critical case $\alpha = \gamma$ almost surely. However, for the trifractional Brownian motion $Z_{H,K}$, its weighted quadratic variation converges to some non-constant random variable in $L^P$ and in probability for $HK > 1/2$ and does not converge in probability for $HK < 1/2$ at each critical case. Furthermore, as in Theorem \ref{Thm_nfbm_ip}, the weighted sum of the $n$-th order fractional Brownian motion converges to some non-constant random variable at the critical case. These facts further refute the applicability of the existing literature \cite{gladyshev1961new, norvaivsa2011complement} to the trifractional Brownian motion and the $n$-th order fractional Brownian motion.

As previously stated, for many self-similar Gaussian processes, their critical exponents of weighted quadratic variation directly relate to their self-similarity indices. However, a counterexample to this relation is established in part \ref{Thm_Tri_ip_b} of Theorem \ref{Thm_tri_new}, where the critical exponent of the weighted quadratic variation does not yield the self-similarity index of the trifractional Brownian motion. The same is true for the $n$-th order fractional Brownian motion when $n \ge 3$. Nevertheless, an estimator for the self-similarity index $HK$ of the trifractional Brownian motion $Z_{H,K}$ can be constructed for $HK \le \frac{1}{2}$, which is the content of the following corollary.
\begin{corollary}\label{cor_estimate}
	Suppose that $HK \le \frac{1}{2}$, and 
	\begin{equation*}
		\ell := -\lim_{n \ua \infty}\dfrac{\log_2 \sum_{k = 1}^{2^n}\bigg(Z_{H,K}\Big(\frac{kT}{2^n}\Big) - Z_{H,K}\Big(\frac{(k-1)T}{2^n}\Big)\bigg)^2}{2n}
	\end{equation*}
	exists. Then $\ell = HK$ almost surely for every $T > 0$.
\end{corollary}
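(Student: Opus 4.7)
The plan is to read off the corollary directly from Theorem \ref{Thm_tri_new}, reinterpreted on the logarithmic scale. Let me write
\begin{equation*}
S_n := \sum_{k=1}^{2^n}\bigg(Z_{H,K}\Big(\tfrac{kT}{2^n}\Big) - Z_{H,K}\Big(\tfrac{(k-1)T}{2^n}\Big)\bigg)^2,
\end{equation*}
so that $\ell = -\lim_{n\ua\infty} \log_2(S_n)/(2n)$ (assumed to exist). The key observation is that under the standing hypothesis $HK \le \tfrac12$, both cases of Theorem \ref{Thm_tri_new} collapse to the same critical exponent $\gamma = 2HK$: part \ref{Tri_part_a} gives $\gamma = 2HK$ when $HK < \tfrac12$, and part \ref{Tri_part_b} gives $\gamma = 1 = 2HK$ when $HK = \tfrac12$. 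So in either subcase, $2^{\alpha n} S_n \to 0$ almost surely for $\alpha < 2HK$ and $2^{\alpha n} S_n \to \infty$ almost surely for $\alpha > 2HK$.

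First I would fix a countable dense set, say $\bQ$, and a full-measure event on which the dichotomy of \eqref{sharp_eq} holds simultaneously for every rational $\alpha \neq 2HK$. On that event, pick any rational $\alpha < 2HK$. Since $2^{\alpha n} S_n \to 0$, one has $\alpha n + \log_2 S_n \to -\infty$, and in particular $-\log_2(S_n)/(2n) > \alpha/2$ for all sufficiently large $n$. Passing to the limit yields $\ell \ge \alpha/2$, and then letting $\alpha \ua 2HK$ through the rationals gives $\ell \ge HK$.

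Symmetrically, for any rational $\alpha > 2HK$, divergence $2^{\alpha n} S_n \to \infty$ gives $-\log_2(S_n)/(2n) < \alpha/2$ eventually, hence $\ell \le \alpha/2$, and then $\alpha \da 2HK$ yields $\ell \le HK$. Combining the two bounds gives $\ell = HK$ almost surely. I do not foresee a genuine obstacle here since the argument is just a logarithmic translation of the critical-exponent statement; the only mild care needed is handling the null sets uniformly in $\alpha$ by restricting to a countable dense set before taking the monotone limit, and noting that the conclusion is independent of $T > 0$ because Theorem \ref{Thm_tri_new} holds for every $T \in (0,\infty)$.
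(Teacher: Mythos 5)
Your proposal is correct and follows essentially the same route as the paper: taking logarithms of the almost sure dichotomy in Theorem \ref{Thm_tri_new} and squeezing $\ell$ between $\alpha/2$ for $\alpha$ approaching $2HK$ from below and above. Your additional care in restricting to a countable dense set of $\alpha$ before taking limits, and in noting that the $HK=\tfrac12$ case falls under part \ref{Tri_part_b} with $\gamma = 1 = 2HK$, actually tightens the paper's argument, which cites only \eqref{tri_case_eq_1_1}.
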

\begin{proof}
	Take logarithm on both sides of equation \eqref{tri_case_eq_1_1}, we have 
	\begin{equation*}
		\lim\limits_{n \rightarrow \infty} 2n\Big(\frac{\alpha}{2} +\dfrac{\log_2 \sum_{k = 1}^{2^n}\bigg(Z_{H,K}\Big(\frac{kT}{2^n}\Big) - Z_{H,K}\Big(\frac{(k-1)T}{2^n}\Big)\bigg)^2}{2n}\Big) = \begin{cases}
			-\infty &\quad \text{for} \quad\alpha<2HK,\\
			+\infty  &\quad \text{for} \quad \alpha>2HK.
		\end{cases}
	\end{equation*}
	This directly implies the result.
\end{proof}

\section{Proofs}\label{Section_Proofs}

The proofs of the above theorems intensively rely on proper upper bounds of $\phi^{(m,n)}_{j,k}$ for both processes. In the following lemma, we derive upper bounds of $\phi^{(m,n)}_{j,k}$ for the trifractional Brownian motion, which are needed in the proofs of main theorems. Moreover, it suffices to prove all convergence results in the above theorems with $T = 1$. For arbitrary $T > 0$, these results hold due to the homogeneity of covariance functions.
\begin{lemma}\label{lemma_upper_Bound_1}
	For $H \in (0,1)$ and $K \in (0,1)$, we denote  
	\begin{equation*}
		\phi^{(m,n)}_{j,k}:= \bE\left[\Big(Z_{H,K}\Big(\frac{j}{2^m}\Big)-Z_{H,K}\Big(\frac{j-1}{2^m}\Big)\Big)\Big(Z_{H,K}\Big(\frac{k}{2^n}\Big)-Z_{H,K}\Big(\frac{k-1}{2^n}\Big)\Big)\right].
	\end{equation*}
	The following inequalities hold:
	\begin{enumerate}
		\item For $1 < j \le 2^m$ and $1 < k \le 2^n$,
		\begin{equation}\label{eq_upper_Bound_2d}
			\phi^{(m,n)}_{j,k} \le L_12^{-(n+m)HK}(k-1)^{HK-1}(j-1)^{HK-1},
		\end{equation}
		where $L_1 = 2^K(1-K)KH^2 > 0$.
		\item For $1 < j \le 2^m$, $1 < k \le 2^n$ and $H \ge 1/2$, 
		\begin{equation}\label{eq_upper_Bound_2d1}
			\phi^{(m,n)}_{j,k} \le L_2(jk)^{2H-1}\min\Big(\frac{(j-1)^{2HK-4H}}{2^{2Hn+(2HK-2H)m}},\frac{(k-1)^{2HK-4H}}{2^{2Hm+(2HK-2H)n}}\Big),
		\end{equation}
		and for $1 < j \le 2^m$, $1 < k \le 2^n$ and $H < 1/2$,
		\begin{equation}\label{eq_upper_Bound_2d2}
			\phi^{(m,n)}_{j,k} \le L_2\big[(j-1)(k-1)\big]^{2H-1}\min\Big(\frac{(j-1)^{2HK-4H}}{2^{2Hn+(2HK-2H)m}},\frac{(k-1)^{2HK-4H}}{2^{2Hm+(2HK-2H)n}}\Big),
		\end{equation}
		where $L_2 = 4K(1-K)H^2 > 0$.
		\item For $m,n \in \bN$ and $1 < k \le 2^n$, 
		\begin{equation}\label{eq_upper_Bound_1d>}
			\phi^{(m,n)}_{1,k} \le L_3(k-1)^{2HK-1-2H}2^{-2Hm+(2H-2HK)n},
		\end{equation}
		where $L_3 = 2HK(1-K) > 0$. 
		\item For $m,n \in \bN$ and $1 < k \le 2^n$,\begin{equation}\label{eq_upper_Bound_1d2}
			\phi^{(m,n)}_{1,k} \le 2HK(k-1)^{2HK-1}2^{-2HKn}.
		\end{equation}
	\end{enumerate}
\end{lemma}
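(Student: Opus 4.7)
The plan is to reduce $\phi^{(m,n)}_{j,k}$ to an integral of derivatives of the function $g(s,t)=(s^{2H}+t^{2H})^K$ on the rectangle $[(j-1)/2^m,j/2^m]\times[(k-1)/2^n,k/2^n]$, and then obtain bounds (a)--(d) by four different elementary estimates of that integrand. To start, expand
\[
\phi^{(m,n)}_{j,k} = C_{H,K}(a_1,b_1)-C_{H,K}(a_1,b_0)-C_{H,K}(a_0,b_1)+C_{H,K}(a_0,b_0),
\]
where $a_i=(j-1+i)/2^m$, $b_i=(k-1+i)/2^n$. Since the univariate terms $s^{2HK}$ and $t^{2HK}$ are annihilated by the mixed second difference, only the $(s^{2H}+t^{2H})^K$ part survives, and the sign flips to give a nonnegative expression.

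For $j>1$ and $k>1$, $g$ is smooth on the rectangle, so
\[
\phi^{(m,n)}_{j,k}= 4H^2K(1-K)\int_{a_0}^{a_1}\!\int_{b_0}^{b_1} s^{2H-1}t^{2H-1}(s^{2H}+t^{2H})^{K-2}\,dt\,ds.
\]
Bound \eqref{eq_upper_Bound_2d} follows by AM--GM: $(s^{2H}+t^{2H})^{K-2}\le (2s^{H}t^{H})^{K-2}$, which separates the integrand into $s^{HK-1}t^{HK-1}$ up to the constant $2^{K-2}$; since $HK-1<0$, each one-dimensional integral is bounded by its value at the left endpoint, yielding the stated constant $L_1=2^{K}(1-K)KH^2$. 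Bounds \eqref{eq_upper_Bound_2d1} and \eqref{eq_upper_Bound_2d2} instead use the alternative estimate $(s^{2H}+t^{2H})^{K-2}\le t^{2H(K-2)}$ (or symmetrically $s^{2H(K-2)}$), producing an integrand of the form $s^{2H-1}t^{2HK-2H-1}$. Here the sign of $2H-1$ matters: when $H\ge 1/2$ one bounds $s^{2H-1}$ at $s=j/2^m$ (giving the factor $j^{2H-1}$), while for $H<1/2$ one bounds it at $s=(j-1)/2^m$; in the $H\ge 1/2$ case we also absorb an extra factor using $(k-1)^{2H-1}\le k^{2H-1}$ to match the form $(jk)^{2H-1}(k-1)^{2HK-4H}$. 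Taking the minimum of the two symmetric choices of bound yields the min appearing in the statement.

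For $j=1$, the covariance satisfies $C_{H,K}(0,\cdot)\equiv 0$, so the two-dimensional integral representation degenerates and we instead have
\[
\phi^{(m,n)}_{1,k}= C_{H,K}\bigl(2^{-m},b_1\bigr)-C_{H,K}\bigl(2^{-m},b_0\bigr)=\int_{b_0}^{b_1}\bigl[h'(u)-\tilde h'(u)\bigr]\,du,
\]
with $h(u)=u^{2HK}$ and $\tilde h(u)=(u^{2H}+a^{2H})^{K}$, $a=2^{-m}$. Writing
\[
h'(u)-\tilde h'(u)=2HK\,u^{2H-1}\bigl[(u^{2H})^{K-1}-(u^{2H}+a^{2H})^{K-1}\bigr]
\]
and applying the mean value theorem to $y\mapsto y^{K-1}$ on the interval $(u^{2H},u^{2H}+a^{2H})$ gives the bracket $\le (1-K)u^{2H(K-2)}a^{2H}$, so the integrand is $\le 2HK(1-K)a^{2H}u^{2HK-2H-1}$. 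Since $2HK-2H-1<-1$, bounding $u^{2HK-2H-1}$ at $u=b_0$ and integrating produces exactly \eqref{eq_upper_Bound_1d>} with $L_3=2HK(1-K)$. Finally \eqref{eq_upper_Bound_1d2} is obtained by the crude estimate $h'(u)-\tilde h'(u)\le h'(u)=2HK\,u^{2HK-1}$ (valid since $\tilde h'\ge 0$), integrating to $b_1^{2HK}-b_0^{2HK}$, and then invoking concavity of $u^{2HK}$ to dominate this by $2HK\,b_0^{2HK-1}(b_1-b_0)$.

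The bulk of the work is algebraic rather than conceptual, but the main obstacle lies in the careful bookkeeping of exponents and endpoints in step (b): the sign of $2H-1$ dictates whether one bounds $s^{2H-1}$ at the left or right endpoint of the interval, and the correct pairing of exponents must be maintained to arrive at the symmetric form involving the minimum. The other delicate point is recognizing that for $j=1$ the natural 2D integral identity collapses because $g$ is not $C^2$ at $s=0$; one must instead set up the 1D integral of $h'-\tilde h'$ and apply the mean value theorem at the level of $y^{K-1}$ to extract the factor $a^{2H}$ responsible for the power of $2^{-2Hm}$ in \eqref{eq_upper_Bound_1d>}.
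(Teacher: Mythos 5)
Your proposal follows essentially the same route as the paper: for $j,k>1$ the mixed second difference is written as the double integral of $\frac{\partial^2 C_{H,K}}{\partial s\,\partial t}$ (with the AM--GM bound $(s^{2H}+t^{2H})^{K-2}\le(2s^Ht^H)^{K-2}$ for part (a), and the bound $\min[s^{2H(K-2)},t^{2H(K-2)}]$ together with the endpoint case split on the sign of $2H-1$ for part (b)), while the degenerate $j=1$ case is handled by differentiating $t\mapsto C_{H,K}(2^{-m},t)$, your mean-value estimate on $y\mapsto y^{K-1}$ being equivalent to the paper's tangent-line inequality $(1+x)^{K-1}\ge 1+(K-1)x$. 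The only caveat --- shared with the paper's own argument --- is that the final step of part (d) (your concavity claim, respectively the paper's bounding of $\tau^{2HK-1}$ at the left endpoint) is literally valid only when $2HK\le 1$, which is precisely the regime in which \eqref{eq_upper_Bound_1d2} is later used.
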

\begin{proof}[Proof of Lemma \ref{lemma_upper_Bound_1}]
	First, we prove inequality \eqref{eq_upper_Bound_2d}. For $s,t > 0$, we get 
	\begin{equation}\label{eq_partial_C_HK}
		\frac{\partial^2 C_{H,K}}{\partial s \partial t}(s,t) = 4K(1-K)H^2(t^{2H}+s^{2H})^{K-2}(st)^{2H-1} \le 2^K K(1-K)H^2(st)^{HK-1}.
	\end{equation}
	For $1 < j \le 2^m$ and $1 < k \le 2^n$, applying \eqref{eq_partial_C_HK} to the following integral representation leads to
	\begin{equation*}
		\begin{split}
			\phi^{(m,n)}_{j,k} &= C_{H,K}\Big(\frac{k}{2^n},\frac{j}{2^m}\Big)+C_{H,K}\Big(\frac{k-1}{2^n},\frac{j-1}{2^m}\Big)-C_{H,K}\Big(\frac{k}{2^n},\frac{j-1}{2^m}\Big)-C_{H,K}\Big(\frac{k-1}{2^n},\frac{j}{2^m}\Big)\\ &= \int_{(k-1)/2^n}^{k/2^n}\int_{(j-1)/2^m}^{j/2^m}\frac{\partial^2 C_{H,K}}{\partial s \partial t}(s,t)dsdt \le 2^{-(m+n)}\sup_{\substack{k-1 \le 2^ns \le k\\ j-1\le 2^mt \le j}}\frac{\partial^2 C_{H,K}}{\partial s \partial t}(s,t)\\ &\le 2^{-(m+n)}L_1\Big(\frac{(j-1)(k-1)}{2^{m+n}}\Big)^{HK-1} = L_12^{-(m+n)HK}(k-1)^{HK-1}(j-1)^{HK-1}.
		\end{split}
	\end{equation*}
	The last inequality holds as $HK < 1$. Next, we prove \eqref{eq_upper_Bound_2d1} for $H \ge 1/2$. As $K-2 < 0$, one has
	\begin{equation*}
		\frac{\partial^2 C_{H,K}}{\partial s \partial t}(s,t) \le L_2\min\big[s^{2H(K-2)},t^{2H(K-2)}\big](st)^{2H-1}.
	\end{equation*}
	Therefore, 
	\begin{equation*}
		\begin{split}
			\phi^{(m,n)}_{j,k} &\le L_22^{-(m+n)}\sup_{\substack{k-1 \le 2^ns \le k\\ j-1\le 2^mt \le j}}\Big(\min\big[s^{2H(K-2)},t^{2H(K-2)}\big](st)^{2H-1}\Big)\\ &= L_22^{-2H(m+n)}\min\Big[\Big(\frac{j-1}{2^m}\Big)^{2H(K-2)},\Big(\frac{k-1}{2^n}\Big)^{2H(K-2)}\Big]\Big(\frac{jk}{2^{m+n}}\Big)^{2H-1}.
		\end{split}
	\end{equation*}
	Rearranging this inequality gives \eqref{eq_upper_Bound_2d1}, and the proof of \eqref{eq_upper_Bound_2d2} is analogous. Let us now prove \eqref{eq_upper_Bound_1d>}. To this end, for each fixed $m \in \bN$, we take $g(t):= C_{H,K}(1/2^m,t)$. Clearly, $g$ is continuous over $[0,1]$ and differentiable over $(0,1)$. Furthermore, since $K < 1$, we have $[1+(2^{-m}/t)^{2H}]^{K-1}\ge 1 + (K-1)(2^{-m}/t)^{2H}$. Thus,
	\begin{equation*}
		0 < g'(t) = 2HK t^{2HK-1}\left[1 - \Big[1+\Big(\frac{2^{-m}}{t}\Big)^{2H}\Big]^{K-1}\right] \le L_3t^{2HK-1-2H}2^{-2Hm}.
	\end{equation*}
	By the mean value theorem, there exists $\tau \in [(k-1)/2^n,k/2^n]$ such that 
	\begin{equation*}
		\begin{split}
			\phi^{(m,n)}_{1,k} &= C_{H,K}\Big(\frac{1}{2^m},\frac{k}{2^m}\Big)-C_{H,K}\Big(\frac{1}{2^m},\frac{k-1}{2^m}\Big) = g'(\tau)2^{-n}\\ &\le L_3(k-1)^{2HK-1-2H}2^{-2Hm+(2H-2HK)n},
		\end{split}
	\end{equation*}
	where the last inequality holds as $2HK-1-2H < 0$. This completes the proof of \eqref{eq_upper_Bound_1d>}. Finally, as $g'(t) \le 2HKt^{2HK-1}$, it leads to 
	\begin{equation*}
		\phi^{(m,n)}_{1,k} \le 2HK\Big(\frac{k-1}{2^n}\Big)^{(2HK-1)n}2^{-n} = 2HK(k-1)^{2HK-1}2^{-2HKn}.
	\end{equation*}
	Thus, we complete the proof of \eqref{eq_upper_Bound_1d2}.
\end{proof}
\begin{proof}[Proof of Theorem \ref{Thm_tri_new}]
	For $n \in \bN$ and $\alpha \in \bR$, we let 
	\begin{equation}\label{eq_def_S}
		S^\alpha_n:= 2^{\alpha n} \sum_{k = 1}^{2^n}\bigg(Z_{H,K}\Big(\frac{k}{2^n}\Big) - Z_{H,K}\Big(\frac{k-1}{2^n}\Big)\bigg)^2.
	\end{equation}
	As the trifractional Brownian motion is a centered Gaussian process, it then follows from Isserlis' theorem
	\begin{equation}\label{eq_exp_var_rep}
		\bE(S_n^\alpha)= 2^{\alpha n}\sum_{k = 1}^{2^n}\phi^{(n)}_{k,k} \quad \text{and} \quad \var{(S_n^\alpha)}= 2^{2\alpha n +1}\sum_{j,k=1}^{2^n}(\phi^{(n)}_{j,k})^2.
	\end{equation}
	In the sequel, we start by analyzing the limits of $\bE(S_n^\alpha)$ and $\var{(S_n^\alpha)}$ as $n \ua \infty$. Subsequently, we discuss case by case to prove \eqref{tri_case_eq_1_1} and \eqref{tri_case_eq_2_1}. Finally, we prove \eqref{tri_case_eq_1_2} and \eqref{tri_case_eq_2_2}. First, applying \eqref{eq_upper_Bound_2d} gives 
	\begin{equation}\label{eq_tri_as_1}
		\bE(S_n^\alpha)= 2^{\alpha n}\Big(\phi^{(n)}_{1,1}+\sum_{k = 2}^{2^n}\phi^{(n)}_{k,k}\Big) \le 2^{(\alpha-2HK) n}\Big((2-2^K)+L_1\sum_{k = 1}^{2^n-1}k^{2HK-2}\Big).
	\end{equation}
	Seeing that for $2HK-2 < -1$, one has $\sum_{k = 1}^{\infty}k^{2HK-2} < \infty$. This implies that if $HK < 1/2$, we have $\bE(S_n^\alpha)\da0$ for $\alpha < 2HK$ as $n \ua \infty$. For $HK > 1/2$, one has
	\begin{equation}\label{eq_tri_as_2}
		\sum_{k = 1}^{2^n-1}k^{2HK-2} \le \int_{0}^{2^n}t^{2HK-2}dt = (2HK-1)^{-1}2^{(2HK-1)n}.
	\end{equation}
	Inequalities \eqref{eq_tri_as_1} and \eqref{eq_tri_as_2} then imply that for $HK > 1/2$, $\bE(S_n^\alpha)\da0$ for $\alpha < 1$ as $n \ua \infty$. Last, for $HK = 1/2$, we have
	\begin{equation*}
		\bE(S_n^\alpha) \le 2^{(\alpha-1)n}\Big((2-2^K)+L_1\sum_{k = 1}^{2^n}\frac{1}{k}\Big) = 2^{(\alpha-1)n}\big((2-2^K)+L_1h_{2^n}\big),
	\end{equation*}
	where $h_{2^n}$ denotes the $2^n$-th \textit{Harmonic number}, and $h_{2^n} \sim n\log2$ as $n \ua \infty$. Thus, we have $\bE(S_n^\alpha) \da 0$ as $n \ua \infty$ for $\alpha < 1$ and $HK = 1/2$. Next, we discuss the upper bounds for $\var{(S^\alpha_n)}$ case by case. By the Cauchy-Schwarz inequality, one has $(\phi^{(n)}_{1,k})^2 \le \phi^{(n)}_{1,1}\phi^{(n)}_{k,k}$. Then,
	\begin{equation*}
		\var{(S_n^\alpha)} \le 2^{2\alpha n +1}\Big(\sum_{j,k=2}^{2^n}(\phi^{(n)}_{j,k})^2 + 2\sum_{k = 1}^{2^n}(\phi^{(n)}_{1,k})^2\Big) \le 2^{2\alpha n +1}\Big(\sum_{j,k=2}^{2^n}(\phi^{(n)}_{j,k})^2 + 2\sum_{k = 1}^{2^n}\phi^{(n)}_{1,1}\phi^{(n)}_{k,k}\Big),
	\end{equation*}
	From \eqref{eq_tri_as_1}, one get
	\begin{equation}\label{eq2}
		\begin{split}
			2^{2\alpha n}\sum_{k = 1}^{2^n}\phi^{(n)}_{1,1}\phi^{(n)}_{k,k} &\le 2^{(2\alpha-4HK)n}(2-2^K)\Big((2-2^K)+L_1\sum_{k = 1}^{2^n-1}k^{2HK-2}\Big)\\ &=  \begin{cases}
				\mathcal{O}(2^{2(\alpha - 2HK)n})&\quad \text{for}\quad HK \in (0,1/2),\\ \mathcal{O}(2^{2(\alpha -1)n}n) &\quad \text{for}\quad HK = 1/2,\\ \mathcal{O}(2^{(2\alpha -2HK-1)n}) &\quad \text{for}\quad HK \in (1/2,1),
		\end{cases} \end{split}
	\end{equation}
	as $n \ua \infty$. Moreover, following \eqref{eq_upper_Bound_2d}, we have 
	\begin{equation}\label{eq3}
		\begin{split}
			2^{2\alpha n}\sum_{j,k=2}^{2^n}(\phi^{(n)}_{j,k})^2 &\le L_1^2\sum_{j,k=1}^{2^n}2^{(2\alpha-4HK)n}(jk)^{4HK-2} = \Big(L_12^{(\alpha-2HK)n}\sum_{k = 1}^{2^n}k^{2HK-1}\Big)^2 \\&= \begin{cases}
				\mathcal{O}(2^{2(\alpha - 2HK)n})&\quad \text{for}\quad HK \in (0,1/2),\\ \mathcal{O}(2^{2(\alpha -1)n}n^2) &\quad \text{for}\quad HK = 1/2,\\ \mathcal{O}(2^{2(\alpha -1)n}) &\quad \text{for}\quad HK \in (1/2,1),
			\end{cases} 
		\end{split}
	\end{equation}
	as $n \ua \infty$. Subsequently, the results \eqref{tri_case_eq_1_1} and \eqref{tri_case_eq_2_1} follow directly from the Borel-Cantelli lemma. By virtue of the Borel-Cantelli lemma, the fast $L^2$-convergence, i.e., $\sum_{n = 0}^{\infty}\var{(S_n^\alpha)} < \infty$, guarantees the almost sure convergence. For instance, inequalities \eqref{eq_tri_as_1}, \eqref{eq2} and \eqref{eq3} imply that $\bE(S^\alpha_n) \da 0$ and $\sum_{n = 0}^{\infty}\var{(S^\alpha_n)}<\infty$ for $\alpha < 2HK$ and $HK < 1/2$. Therefore, $S^\alpha_n$ converges to zero almost surely. The proofs for $HK = 1/2$ and $HK \ge 1/2$ also follow from \eqref{eq_tri_as_1}, \eqref{eq2} and \eqref{eq3} analogously. Thus, we complete the proof of \eqref{tri_case_eq_1_1} and \eqref{tri_case_eq_2_1}.
	
	Now, let us proceed to prove \eqref{tri_case_eq_1_2}. There exists a (dependent) sequence of standard normally distributed random variables $\{Y_n, n \ge 1\}$, such that 
	\begin{equation}\label{tri_dist_eq}
		Z_{H,K}\Big(\frac{1}{2^n}\Big) = \sqrt{2-2^K}2^{-HKn}Y_n.
	\end{equation}
	Therefore, for $HK < \frac{1}{2}$, $\alpha >2HK$ and any given $M > 0$, we get
	\begin{align*}
		\bP\big(S_n^\alpha \le M\big) &\le \bP\big(2^{\alpha n} Z^2_{H,K}\Big(\frac{1}{2^n}\Big) \le M\big)= \bP\Big(Y^2_n \le \frac{2^{(2HK-\alpha)n} M}{(2-2^K)}\Big) = \frac{\gamma\Big(1/2,\frac{2^{(2HK-\alpha)n} M}{2(2-2^K)}\Big)}{\Gamma(1/2)},
	\end{align*}
	where $\gamma(k,x) = \int_{0}^{x}t^{k-1}e^{-t}dt$ is the \emph{lower incomplete gamma function}. By change of variables and Taylor expansion, we have 
	\begin{equation*}
		\dfrac{\gamma\Big(1/2,\dfrac{2^{(2HK-\alpha)n} M}{2(2-2^K)}\Big)}{\Gamma(1/2)} = \frac{2}{\sqrt{\pi}}\int_0^{\sqrt{\frac{2^{(2HK-\alpha)n} M}{2(2-2^K)}}}e^{-t^2}dt = \frac{2}{\sqrt{\pi}}\sqrt{\frac{2^{(2HK-\alpha)n} M}{2(2-2^K)}} + \mathcal{O}(2^{(3HK-\frac{3}{2}\alpha)n}),
	\end{equation*}as $n \ua \infty$. As $2HK-\alpha < 0$, the Borel-Cantelli lemma implies that $S_n^\alpha$ converges to infinity almost surely. This completes the proof of \eqref{tri_case_eq_2_1}.
	
	Before proving equation \eqref{tri_case_eq_2_2}, let us recall the definition of the Lei-Nualart process $X_K = \{X_K(t), t\in [0,\infty)\}$. For $K \in (0,1)$, the Lei-Nualart process is defined as 
	\begin{equation}\label{X_process_eq}
		X_K(t) = \int_{0}^{\infty} (1 - e^{-s t})s^{-\frac{1+K}{2}}dB_s, 
	\end{equation}
	where $\{B_t, t \in [0,\infty)\}$ is a standard Brownian motion. Lei and Nualart \cite{lei2009decomposition} show that the process $X_K$ has trajectories which are infinitely differentiable on $(0,\infty)$ and absolutely continuous on $[0,\infty)$. As a result of the following construction in \cite{lei2009decomposition}
	\begin{equation*}
		Z_{H,K}(t) = \sqrt{\frac{K}{\Gamma(1-K)}}X_{K}(t^{2H}),
	\end{equation*}
	the trifractional Brownian motion $Z_{H,K}(t)$ also admits infinitely differentiable trajectories on $(0,\infty)$. Hence, for any $n \in \bN$ and $k \ge 2$, we have
	\begin{equation*}
		Z_{H,K}\Big(\frac{k}{2^n}\Big) = Z_{H,K}\Big(\frac{k-1}{2^n}\Big) + Z'_{H,K}\Big(\frac{k-1}{2^n}\Big)2^{-n} + \frac{1}{2}Z''_{H,K}\Big(\frac{u_{k,n}}{2^n}\Big)2^{-2n},
	\end{equation*}
	where $u_{k,n} \in (k-1,k)$. Hence, 
	\begin{equation*}
		\bigg(Z_{H,K}\Big(\frac{k}{2^n}\Big)-Z_{H,K}\Big(\frac{k-1}{2^n}\Big)\bigg)^2 = 2^{-2n}\Big(Z'_{H,K}\Big(\frac{k-1}{2^n}\Big)\Big)^2 + \mathcal{O}(2^{-3n}).
	\end{equation*}
	As $\big(Z'_{H,K}(t)\big)^2$ is still continuously differentiable on $(0,\infty)$ by composition, we have for every $\omega \in \Omega$ and any $\varepsilon > 0$, 
	\begin{align*}
		&\lim_{n\rightarrow \infty}\sum_{k = \left \lfloor{2^n\epsilon}\right \rfloor }^{2^n}2^n\bigg(Z_{H,K}\Big(\frac{k}{2^n},\omega\Big) - Z_{H,K}\Big(\frac{k-1}{2^n},\omega\Big)\bigg)^2\\&= \lim_{n\rightarrow \infty}\sum_{k = \left \lfloor{2^n\epsilon}\right \rfloor }^{2^n}2^{-n}\Big(Z'_{H,K}\Big(\frac{k-1}{2^n},\omega\Big)\Big)^2 + \mathcal{O}(2^{-n}) = \int_{\varepsilon}^{1}\Big(Z'_{H,K}(t,\omega)\Big)^2dt.
	\end{align*}
	As $Z_{H,K}(t)$ is non-constant almost surely,  $\int_{\varepsilon}^{1}\big(Z'_{H,K}(t)\big)^2dt$ is positive almost surely. As a result, for $\alpha > 1$, we have 
	\begin{align*}
		\lim_{n\rightarrow \infty}S^\alpha_n &\ge \lim_{n\rightarrow \infty}2^{(\alpha-1)n}\lim_{n\rightarrow \infty}\sum_{k = \left \lfloor{2^n\epsilon}\right \rfloor }^{2^ n}2^{ n}\bigg(Z_{H,K}\Big(\frac{k}{2^n}\Big) - Z_{H,K}\Big(\frac{k-1}{2^n}\Big)\bigg)^2 \\&= \lim_{n\rightarrow \infty}2^{(\alpha-1)n}\int_{\varepsilon}^{1}\Big(Z'_{H,K}(t)\Big)^2dt = \infty, \quad a.s.
	\end{align*}
	This completes the proof of the result \eqref{tri_case_eq_2_2}. Hence, we complete the proof of Theorem \ref{Thm_tri_new}. 
\end{proof}
\begin{proof}[Proof of Theorem \ref{nfbm_thm}]
	Throughout the following proof, we adopt the notations in \eqref{eq_phi_def} and \eqref{eq_def_S} with the $n$-th order fractional Brownian motion. Applying Taylor series expansion to \eqref{n_fbm_eq_1} with $z = 1/t$ yields
	\begin{align*}
		\psi_{B_{H,m}}(t,t-1) &= (-1)^mC_H^m\Bigg(\sum_{j = 0}^{m-1}(-1)^j{2H \choose j}\bigg(t^j(t-1)^{2H-j} + (t-1)^jt^{2H-j} -t^{2H} - (t-1)^{2H}\bigg) - 1\Bigg)\\&=(-1)^mC_H^m\Bigg(\sum_{j = 0}^{m-1}(-1)^j{2H \choose j}z^{-2H}\Big((1-z)^{2H-j} + (1-z)^{j} - 1 - (1-z)^{2H}\Big) - 1\Bigg)\\&=(-1)^mC_H^m\Bigg(\sum_{j = 0}^{m-1}(-1)^j{2H \choose j}z^{-2H}\Big(j(j-2H)z^2 + \mathcal{O}(z^3)\Big) - 1\Bigg)\\&=(-1)^mC_{H}^m\Bigg(2H(2H-1)\sum_{j=0}^{m-2}(-1)^{j}{2H-2 \choose j} t^{2H-2} -1\Bigg) + \mathcal{O}(t^{2H-3})\\&= C_{H-1}^{m-1}{2H-3 \choose m-2}t^{2H-2} + (-1)^{m+1}C_H^m + \mathcal{O}(t^{2H-3}).
	\end{align*}
	As $m \ge 2$ and $H \in (m-1,m)$, we have $2H - 2 > 0$, and this gives
	\begin{equation*}
		\psi_{B_{H,m}}(t,t-1) \sim C_{H-1}^{m-1}{2H-3 \choose m-2}t^{2H-2} \quad \text{as} \quad t \ua \infty.
	\end{equation*}
	As the $m$-th order fractional Brownian motion is $H$-self-similar, then 
	\begin{equation}\label{fbm_eq_1}
		\sum_{k = 1}^{2^n}\phi^{(n)}_{k.k} = 2^{-2Hn}\sum_{k = 1}^{2^n}\psi_{B_{H,m}}(k-1,k) = \mathcal{O}(2^{-n}) \quad \text{as} \quad n \ua \infty.
	\end{equation}
	Therefore, $\bE(S_n^\alpha) = \mathcal{O}(2^{(\alpha-1)n})$ as $n \ua \infty$. Now, let us recall that the Cauchy-Schwarz inequality gives $(\phi^{(n)}_{j,k})^2 \le \phi^{(n)}_{j,j}\phi^{(n)}_{k,k}$, which leads to
	\begin{equation*}
		\var(S_n^\alpha) = 2^{2\alpha n+1}\sum_{j,k = 1}^{2^n}(\phi^{(n)}_{j,k})^2 \le 2^{2\alpha n+1}\left(\sum_{k = 1}^{2^n}\phi^{(n)}_{k,k}\right)^2 = \mathcal{O}(2^{2(\alpha-1)n}).
	\end{equation*}
	This then gives the fast $L^2$-convergence for the case $\alpha < 1$, which guarantees the almost sure convergence. Hence, the result in \eqref{nthm_thm_eq_1} is then verified. 
	
	The proof of \eqref{nthm_thm_eq_2} follows analogously as the arguments in the proof of \eqref{tri_case_eq_2_2} in Theorem \ref{Thm_tri_new}. Following \cite[Remark 2.3]{sottinen2019transfer}, $B_{H,m}$ is $(m-1)$ times differentiable. Thus, for $m \ge 3$, the arguments in the proof Theorem \ref{Thm_tri_new} can be applied to the $m$-th order fractional Brownian motion. Hence, we complete the proof of Theorem \ref{nfbm_thm}.
\end{proof}
\begin{proof}[Proof of Theorem \ref{thm_tri_ip}]
	First, we prove \ref{Thm_Tri_ip_a} for the case $2H < 1$, the proof for the case $2H \ge 1$ is analogous. By virtue of Viitasaari \cite[Theorem 2.1]{viitasaari2019necessary}, it suffices to show that the double limit
	\begin{equation}\label{eq_double}
		\lim_{n,m \ua \infty}2^{2HK(n+m)}\sum_{k = 1}^{2^n}\sum_{j = 1}^{2^m}(\phi^{(m,n)}_{j,k})^2
	\end{equation}
	fails to exist. To this end, for $n,m \in \bN$, we denote $a_{m,n}:= 2^{2HK(n+m)}\sum_{k = 1}^{2^n}\sum_{j = 1}^{2^m}(\phi^{(m,n)}_{j,k})^2$. Next, we show the limit $\lim_{n \ua \infty}a_{n,n}$ exists and is strictly positive. By the Cauchy-Schwarz inequality, we have $(\phi_{j,k}^{(m,n)})^2 \le \phi^{(m)}_{j,j}\phi^{(n)}_{k,k}$, then
	\begin{equation*}
		a_{n,m} \le 2^{2HK(n+m)}\sum_{k = 1}^{2^n}\sum_{j = 1}^{2^m}\phi^{(n)}_{k,k}\phi^{(m)}_{j,j} = \Big(2^{2HKn}\sum_{k = 1}^{2^{n}}\phi^{(n)}_{k,k}\Big)\Big(2^{2HKm}\sum_{j = 1}^{2^{m}}\phi^{(m)}_{j,j}\Big).
	\end{equation*}
	From \eqref{eq_tri_as_1}, if $HK < 1/2$, the sequence $(\sum_{j = 1}^{2^{m}}\phi^{(m)}_{j,j})_{m \in \bN}$ is uniformly bounded. Hence, there exists $M > 0$, such that $\sup_{m,n}a_{m,n} \le M$. As the trifractional Brownian motion is $HK$-self-similar, then
	\begin{equation*}
		a_{n,n} = \sum_{k = 1}^{2^n}\sum_{j = 1}^{2^n}(2^{2HKn}\phi^{(n)}_{j,k})^2 = \sum_{k = 1}^{2^n}\sum_{j = 1}^{2^n}\bE\Big[\Big(Z_{H,k}(j)-Z_{H,k}(j-1)\Big)\Big(Z_{H,k}(k)-Z_{H,k}(k-1)\Big)\Big]^2.
	\end{equation*}
	Therefore, $(a_{n,n})_{n \in \bN}$ forms a non-decreasing uniformly bounded sequence. Hence, the limit $\lim_{n \ua \infty}a_{n,n}$ exists and $\lim_{n \ua \infty}a_{n,n} \ge a_{0,0} = \bE([Z_{H,K}(1)]^2) = (2-2^K) > 0$. 
	
	Now, we show that for each fixed $n \in \bN$, $\lim_{m \ua \infty}a_{m,n} = \lim_{m \ua \infty}a_{n,m} = 0$. For $k \ge 2$, inequality \eqref{eq_upper_Bound_2d2} yields
	\begin{equation*}
		2^{2HK(m+n)}\sum_{j = 1}^{2^m}(\phi_{j,k}^{(m,n)})^2 \le(k-1)^{4HK-4H-2}2^{(4H-2HK)n}2^{(2HK-4H)m}\Big( L_2^2\sum_{j = 1}^{2^m}j^{4H-2} + L_3^2\Big).
	\end{equation*}
	If $H < 1/4$, the sequence $(j^{4H-2})_{j \in \bN} \in \ell_1$, and $2^{2HK(m+n)}\sum_{j = 1}^{2^m}(\phi_{j,k}^{(m,n)})^2$ converges to zero as $m \ua \infty$. If $H > 1/4$, one has
	\begin{equation*}
		2^{(2HK-4H)m}\sum_{j = 1}^{2^m}j^{4H-2} \le 2^{(2HK-4H)m}\int_{0}^{2^m}t^{4H-2}dt = (4H-1)^{-1}2^{(2HK-1)m}.
	\end{equation*}
	Therefore, $2^{2HK(m+n)}\sum_{j = 1}^{2^m}(\phi_{j,k}^{(m,n)})^2 = \mathcal{O}(2^{(2HK-1)m})$ converges to zero as $m \ua \infty$. Last, if $H = 1/4$, then $\sum_{j = 1}^{2^m}j^{-1} = h_{2^m} \sim m\log 2$, where $h_{2^m}$ is the $2^m$-th Harmonic number. It then implies that $2^{2HK(m+n)}\sum_{j = 1}^{2^m}(\phi_{j,k}^{(m,n)})^2$ converges to zero. For $k = 1$, it follows from \eqref{eq_upper_Bound_1d2} that
	\begin{equation}
		\begin{split}
			2^{2HK(m+n)}\sum_{j = 2}^{2^m}(\phi_{j,1}^{m,n})^2 &\le 2^{2HK(n-m)+2}H^2K^2\sum_{j = 2}^{2^m}(j-1)^{4HK-2}\\ &= \begin{cases} \mathcal{O}(2^{-2HKm}) \quad &\text{for} \quad HK < 1/4,\\
				\mathcal{O}(2^{-2HKm}m) \quad &\text{for} \quad HK = 1/4,\\
				\mathcal{O}(2^{(2HK-1)m}) \quad &\text{for} \quad HK > 1/4,
			\end{cases}
		\end{split}
	\end{equation}
	as $m \ua \infty$. Finally, for each $n \in \bN$, we have 
	\begin{equation}
		0 < 2^{2HK(m+n)}(\phi^{(m,n)}_{1,1})^2 \le 2^{2HK(n-m)}.
	\end{equation}
	The above inequalities in together imply that for each $n \in \bN$ and $1 \le k \le 2^n$, we have 
	\begin{equation*}
		\lim_{m \ua \infty}2^{HK(m+n)}\sum_{j = 1}^{2^m}(\phi_{j,k}^{(m,n)})^2 = 0.
	\end{equation*}
	Hence, we have $\lim_{m \ua \infty}a_{m,n} = \lim_{m \ua \infty}a_{n,m} = 0$. However, as previously proved, $\lim_{n \ua \infty}a_{n,n} > 0$. Therefore, the double limit \eqref{eq_double} does not exists, and we then complete the proof of assertion \ref{Thm_Tri_ip_a}. 
	
	We now prove assertion \ref{Thm_Tri_ip_b}. Following \eqref{eq_tri_as_1} and \eqref{eq_tri_as_2}, we have 
	\begin{equation*}
		\sup_{n \in \bN} 2^n \sum_{k = 1}^{2^n}\phi^{(n)}_{k,k} \le (2-2^K)+\frac{L_1}{2HK-1}.
	\end{equation*}
	Then Proposition \ref{Lemma_Lp} implies the assertion \ref{Thm_Tri_ip_b}. 
\end{proof}
\begin{proof}[Proof of Theorem \ref{Thm_nfbm_ip}]
	This theorem follows as a direct corollary of Proposition \ref{Lemma_Lp} and \eqref{fbm_eq_1}.
\end{proof}
\bibliographystyle{plain}
\bibliography{CTbook}
\end{document}